\newcommand{\assign}{:=}
\newcommand{\longdownarrow}{\downarrow}
\newcommand{\noplus}{}
\newcommand{\nosymbol}{}
\newcommand{\tmmathbf}[1]{\ensuremath{\boldsymbol{#1}}}
\newcommand{\tmop}[1]{\ensuremath{\operatorname{#1}}}
\newcommand{\tmstrong}[1]{\textbf{#1}}
\newenvironment{enumerateroman}{\begin{enumerate}[i.] }{\end{enumerate}}
\newtheorem{notation}{Notation}
\begin{document}

\title{On the geometry and the deformation of shape represented by a piecewise
continuous B\'ezier curve with application to shape
optimization}
\author{Olivier Ruatta}
\institute{XLIM - DMI UMR CNRS 7252 Universit\'e de Limoges - CNRS\\
\email{olivier.ruatta@maths.cnrs.fr}}

\maketitle

\begin{abstract}
  In this work, we develop a framework based on piecewize B\'ezier curves to
  plane shapes deformation and we apply it to shape optimization problems. We
  describe a general setting and some general result to reduce the study of a
  shape optimization problem to a finite dimensional problem of integration of
  a special type of vector field. We show a practical problem where this
  approach leads to efficient algorithms. 
\end{abstract}

In all the text below, $E =\mathbbm{R}^2$. In this text, we will define a set
of \ manifolds, each point of such a manifold is a parametrized curves in $E$.

\section{B\'ezier curves}

B\'ezier curves are usual objects in Computer Aided Geometric Design (CAGD)
and have natural and straightforward generalization for surfaces and higher
dimension geometrical objects. We focus here on curves even if a lot of
results have natural generalization in higher dimension. This section has aim
to fix notation and make the paper as self contained as possible.

\subsection{Basic definitions}

Given \ $P_0, P_1, \ldots, P_D \in E$, we define:
\[ B \left( \left(\begin{array}{c}
     P_0, \ldots, P_D
   \end{array}\right), t \right) = \left( 1 - t \right) B \left(
   \left(\begin{array}{c}
     P_0, \ldots, P_{D - 1}
   \end{array}\right), t \right) \noplus + t B \left( \left(\begin{array}{c}
     P_1, \ldots, P_D
   \end{array}\right), t \right) \]
with $B \left( \left( P \right), t \right) = P$ for every $P \in E$. The
associated B\'ezier curve is \\$\left\{ B \left( \left(\begin{array}{c}
  P_0, \ldots, P_D
\end{array}\right), t \right) \left|  \right. t \in 0, 1 \right\}$ and the
list $\left(\begin{array}{c}
  P_0, \ldots, P_D
\end{array}\right)$ is called the control polygon and the points $P_0, \ldots,
P_D$ are called the control points.

This process associates to every set of points a parametrized curve. It is a
polynomial parametrized curve and its degree is bounded :

\begin{proposition}
  Let $P_0, \ldots, P_D \in E$, then $B \left( \left(\begin{array}{c}
    P_0, \ldots, P_D
  \end{array}\right), t \right)$ is a polynomial parametrization and its
  coordinates have degree at most $D$. 
\end{proposition}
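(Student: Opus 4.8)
The plan is to argue by induction on $D$, exploiting directly the recursive definition of $B$. I will prove the statement $S(D)$: for every choice of control points $P_0, \ldots, P_D \in E$, each of the two coordinates of $B\left(\left(\begin{array}{c} P_0, \ldots, P_D \end{array}\right), t\right)$ is a real polynomial in $t$ of degree at most $D$. For the base case $D = 0$, the convention $B\left(\left(P_0\right), t\right) = P_0$ gives a constant point of $E =\mathbbm{R}^2$, so both coordinates are constant polynomials, of degree at most $0$; hence $S(0)$ holds.

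For the inductive step I would assume $S(D-1)$ and take arbitrary $P_0, \ldots, P_D$. The recursion expresses the parametrization as
\[
B\left(\left(\begin{array}{c} P_0, \ldots, P_D \end{array}\right), t\right) = (1-t)\, B\left(\left(\begin{array}{c} P_0, \ldots, P_{D-1} \end{array}\right), t\right) + t\, B\left(\left(\begin{array}{c} P_1, \ldots, P_D \end{array}\right), t\right).
\]
Each of the two inner expressions is built from exactly $D$ control points, so by the induction hypothesis $S(D-1)$ its coordinates are polynomials of degree at most $D-1$. Since the recombination is carried out coordinate by coordinate in $E =\mathbbm{R}^2$, and since multiplication by the degree-one factors $1-t$ and $t$ raises the degree by at most one while the sum of two polynomials of degree at most $D$ remains of degree at most $D$, each coordinate of the left-hand side is a polynomial of degree at most $D$. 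This establishes $S(D)$ and closes the induction.

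I expect no serious obstacle: the statement is essentially a formalization of the recursive structure, and the argument is a routine degree count. The only points requiring a little care are the index bookkeeping — verifying that both sub-polygons $P_0, \ldots, P_{D-1}$ and $P_1, \ldots, P_D$ carry exactly $D$ points, so that the hypothesis $S(D-1)$ genuinely applies to them — and making explicit that the affine recombination acts componentwise, so that the claim about a "polynomial parametrization" reduces cleanly to the two independent statements about the real coordinate functions.
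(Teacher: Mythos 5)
Your induction is correct and complete: the base case $D=0$ is handled by the convention $B((P),t)=P$, both sub-polygons in the recursion have exactly $D$ points so the hypothesis $S(D-1)$ applies, and the degree count through the affine recombination $(1-t)(\cdot)+t(\cdot)$ is right. The paper itself states this proposition without proof, so there is nothing to compare against directly; your argument by induction on the recursive definition is the natural one at this point in the text. Note that the paper's later proposition expressing $B\left(\left(P_0,\ldots,P_D\right),t\right)=\sum_{i=0}^{D} P_i\, b_{i,D}(t)$ in the Bernstein basis gives the degree bound immediately as an alternative route, but that expansion is itself usually established by the same induction, so your proof is the more elementary and self-contained of the two.
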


\subsection{Bernstein's polynomials \ }

\begin{definition}
  Let $D$ be an integer and $i \in \left\{ 0, \ldots, D \right\}$, we define
  the Bernstein polynomial $b_{i, D} \left( t \right) \assign \binom{i}{D} 
  \left( 1 - t \right)^{D - i} t^i$.
\end{definition}

\begin{notation}
  We denote $\mathbbm{R} \left[ t \right]_D$ the set of polynomial of degree
  less or equal to $D$. The set $\mathbbm{R} \left[ t \right]_D$ has a natural
  $\mathbbm{R}$-vector space structure, its dimension is $D + 1$ and $\left\{
  1, t, \ldots, t^D \right\}$ is a basis of this vector space. 
\end{notation}

\begin{proposition}
  The set $\left\{ b_{0, D}, \ldots, b_{D, D} \right\}$ is a basis of
  $\mathbbm{R} \left[ t \right]_N$.
\end{proposition}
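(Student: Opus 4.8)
The plan is to exploit the dimension count recorded in the Notation above: since $\mathbbm{R} \left[ t \right]_D$ is an $\mathbbm{R}$-vector space of dimension $D + 1$ and the family $\left\{ b_{0, D}, \ldots, b_{D, D} \right\}$ consists of exactly $D + 1$ elements, it suffices to establish linear independence, after which spanning (and hence the basis property) follows automatically. Equivalently one could prove spanning and deduce independence, but independence is the more convenient of the two to verify directly.

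First I would record the key structural feature of each Bernstein polynomial, namely its order of vanishing at $t = 0$. From the definition $b_{i, D} \left( t \right) = \binom{D}{i} \left( 1 - t \right)^{D - i} t^i$ together with the fact that $\left( 1 - t \right)^{D - i}$ has constant term $1$, the lowest-degree monomial occurring in $b_{i, D}$ is $\binom{D}{i} t^i$, whose coefficient $\binom{D}{i}$ is a strictly positive integer. Thus $b_{i, D}$ vanishes to order exactly $i$ at the origin, and crucially these orders $0, 1, \ldots, D$ are pairwise distinct.

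With this in hand, linear independence follows by the standard ``distinct valuations'' argument. Suppose $\sum_{i = 0}^D \lambda_i b_{i, D} = 0$ with the $\lambda_i \in \mathbbm{R}$, and assume for contradiction that they are not all zero; let $j$ be the smallest index with $\lambda_j \neq 0$. The terms with $i < j$ vanish by the choice of $j$, and each term with $i > j$ contributes no monomial $t^j$, since its lowest-order monomial is $t^i$ with $i > j$. Hence the coefficient of $t^j$ in the left-hand side is exactly $\lambda_j \binom{D}{j}$, which is nonzero, contradicting that the combination is the zero polynomial. Therefore every $\lambda_i$ vanishes, the family is linearly independent, and by the dimension count it is a basis of $\mathbbm{R} \left[ t \right]_D$.

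I expect no serious obstacle here: the only points requiring care are the non-vanishing of the leading coefficients $\binom{D}{i}$ and the matching of the cardinality $D + 1$ with the dimension of $\mathbbm{R} \left[ t \right]_D$. An alternative, slightly more computational route would be to expand $\left( 1 - t \right)^{D - i}$ by the binomial theorem and read off the transition matrix from $\left\{ b_{0, D}, \ldots, b_{D, D} \right\}$ to the monomial basis $\left\{ 1, t, \ldots, t^D \right\}$, observing that it is triangular with nonzero diagonal entries and hence invertible; this reproves the same fact, but the valuation argument avoids writing the matrix down explicitly.
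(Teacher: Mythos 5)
Your proof is correct. The paper itself states this proposition without any proof, so there is nothing to compare against; your argument fills that gap with the standard one. You rightly read the binomial coefficient as $\binom{D}{i}$ (the paper's definition writes $\binom{i}{D}$, an evident typo) and work in $\mathbbm{R}\left[t\right]_D$ rather than the $\mathbbm{R}\left[t\right]_N$ of the statement (another typo). The two ingredients you use are exactly what is needed: the cardinality of the family matches the dimension $D+1$ recorded in the paper's Notation, so linear independence suffices, and independence follows from the fact that $b_{i,D}$ vanishes to order exactly $i$ at $t=0$ with lowest-order coefficient $\binom{D}{i}\neq 0$, which makes the change-of-basis matrix to $\left\{1,t,\ldots,t^D\right\}$ triangular with nonzero diagonal. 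Either phrasing (the minimal-index valuation argument or the explicit triangular matrix) is complete and rigorous.
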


\begin{proposition}
  Let $P_0, \ldots, P_D \in E$, then $B \left( \left(\begin{array}{c}
    P_0, \ldots, P_D
  \end{array}\right), t \right) = \underset{i = 0}{\overset{N}{\sum}} P_i
  b_{i, D} \left( t \right)$ for all $t \in \left[ 0, 1 \right]$. 
\end{proposition}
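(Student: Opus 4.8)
The plan is to argue by induction on the degree $D$, pushing the recursion that \emph{defines} $B$ down onto a single recursion for the Bernstein polynomials. For the base case $D=0$ the definition gives $B((P_0),t)=P_0$, while the right-hand side is $P_0\,b_{0,0}(t)=P_0$ since $b_{0,0}(t)=(1-t)^0t^0=1$; the two sides agree.

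For the inductive step, I suppose the identity holds for every control polygon producing a curve of degree $D-1$, and apply it to the two sub-polygons $(P_0,\ldots,P_{D-1})$ and $(P_1,\ldots,P_D)$ that occur in the defining recursion. This gives
\[ B((P_0,\ldots,P_{D-1}),t)=\sum_{i=0}^{D-1}P_i\,b_{i,D-1}(t),\qquad B((P_1,\ldots,P_D),t)=\sum_{j=1}^{D}P_j\,b_{j-1,D-1}(t), \]
where in the second sum I have reindexed by $j=i+1$ (its control points are $P_1,\ldots,P_D$). Substituting these into $B((P_0,\ldots,P_D),t)=(1-t)B((P_0,\ldots,P_{D-1}),t)+tB((P_1,\ldots,P_D),t)$ and regrouping by a common index $k$ shows that the coefficient of each $P_k$ is $(1-t)\,b_{k,D-1}(t)+t\,b_{k-1,D-1}(t)$, with the convention $b_{k,D-1}(t)=0$ whenever $k<0$ or $k>D-1$.

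It then remains to establish the single Bernstein recursion
\[ b_{k,D}(t)=(1-t)\,b_{k,D-1}(t)+t\,b_{k-1,D-1}(t)\qquad(0\le k\le D), \]
which is the heart of the argument and the step I expect to require the most care. It follows from Pascal's rule $\binom{D}{k}=\binom{D-1}{k}+\binom{D-1}{k-1}$ together with the two factorizations $(1-t)\cdot(1-t)^{(D-1)-k}t^k=(1-t)^{D-k}t^k$ and $t\cdot(1-t)^{(D-1)-(k-1)}t^{k-1}=(1-t)^{D-k}t^k$: both right-hand terms carry the same monomial $(1-t)^{D-k}t^k$, so their binomial coefficients add, reproducing $\binom{D}{k}$. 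The boundary indices need a brief separate check: for $k=0$ the term $t\,b_{-1,D-1}$ vanishes by convention and $(1-t)b_{0,D-1}=b_{0,D}$, and symmetrically for $k=D$ the term $(1-t)b_{D,D-1}$ vanishes while $t\,b_{D-1,D-1}=b_{D,D}$.

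Combining this recursion with the regrouped expression identifies the coefficient of $P_k$ as exactly $b_{k,D}(t)$ for every $k\in\{0,\ldots,D\}$, which closes the induction. The only genuinely delicate point is keeping the index bookkeeping and the out-of-range conventions consistent; once the Bernstein recursion is in hand, the rest is the formal collection of like terms.
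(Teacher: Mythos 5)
The paper states this proposition without giving any proof, so there is nothing to compare against; judged on its own, your argument is correct and complete. It is the standard proof: induction on $D$ through the de Casteljau recursion that defines $B$, with the coefficient collection reduced to the Bernstein recursion $b_{k,D}=(1-t)b_{k,D-1}+t\,b_{k-1,D-1}$, which you correctly derive from Pascal's rule and check at the boundary indices $k=0$ and $k=D$. One minor remark: the paper's definition writes the binomial coefficient as $\binom{i}{D}$, which is evidently a transposition of $\binom{D}{i}$ (otherwise $b_{i,D}$ would vanish for $i<D$); your use of Pascal's rule in the form $\binom{D}{k}=\binom{D-1}{k}+\binom{D-1}{k-1}$ silently and correctly adopts the intended reading. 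Also note the statement's upper summation limit $N$ is a typo for $D$, which your proof likewise handles in the intended sense.
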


\begin{corollary}
  Every polynomially parametrized curve can be represented as a B\'ezier
  curve. 
\end{corollary}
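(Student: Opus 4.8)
The plan is to reduce the statement to the preceding proposition, which records that $\left\{ b_{0, D}, \ldots, b_{D, D} \right\}$ is a basis of $\mathbbm{R} \left[ t \right]_D$, and then to argue coordinate by coordinate in $E = \mathbbm{R}^2$. A polynomially parametrized curve is by definition a map $t \mapsto \left( p \left( t \right), q \left( t \right) \right)$ with $p, q \in \mathbbm{R} \left[ t \right]$. First I would set $D = \max \left( \deg p, \deg q \right)$, so that both $p$ and $q$ lie in $\mathbbm{R} \left[ t \right]_D$.

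Next, since the Bernstein polynomials of degree $D$ form a basis of $\mathbbm{R} \left[ t \right]_D$, I would expand each coordinate in that basis: there exist scalars $a_0, \ldots, a_D$ and $c_0, \ldots, c_D$ with $p \left( t \right) = \sum_{i = 0}^D a_i b_{i, D} \left( t \right)$ and $q \left( t \right) = \sum_{i = 0}^D c_i b_{i, D} \left( t \right)$. Setting $P_i = \left( a_i, c_i \right) \in E$ for $i = 0, \ldots, D$ and using linearity in each coordinate, one obtains $\sum_{i = 0}^D P_i b_{i, D} \left( t \right) = \left( p \left( t \right), q \left( t \right) \right)$ for every $t$.

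Finally, I would invoke the earlier proposition asserting $B \left( \left( P_0, \ldots, P_D \right), t \right) = \sum_{i = 0}^D P_i b_{i, D} \left( t \right)$ to conclude that the control polygon $\left( P_0, \ldots, P_D \right)$ produces exactly the given curve for $t \in \left[ 0, 1 \right]$, which is the claim.

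The argument is essentially immediate once the basis property is in hand, so there is no serious obstacle; the only point requiring a little care is the choice of a common degree $D$ for the two coordinate polynomials. Here one uses that $\mathbbm{R} \left[ t \right]_d \subseteq \mathbbm{R} \left[ t \right]_D$ whenever $d \leq D$, so that taking the maximum of the two coordinate degrees is harmless and the single Bernstein basis $\left\{ b_{0, D}, \ldots, b_{D, D} \right\}$ serves to expand both coordinates simultaneously.
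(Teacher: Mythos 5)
Your argument is correct and is exactly the intended one: the paper leaves this corollary without an explicit proof precisely because it follows immediately from the two preceding propositions (the Bernstein polynomials form a basis of $\mathbbm{R}\left[t\right]_D$, and $B\left(\left(P_0,\ldots,P_D\right),t\right)=\sum_{i=0}^{D}P_i b_{i,D}\left(t\right)$), which is what you use. Your remark about choosing the common degree $D=\max\left(\deg p,\deg q\right)$ is a sensible point of care and does not change the approach.
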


\subsection{Interpolation}

Since a B\'ezier curve of degree $D$ is defined using $D + 1$ control points,
one can hope to associate $D + 1$ control points from a sampling of $D + 1$
points on a curve. The following result shows that this is possible. But in
fact, we do not have one B\'ezier curve of degree $D$ but many ones. Each such
curve is associated to a particular sampling of the parameter interval $\left[
0, 1 \right]$.

\begin{proposition}
  Let $M_0, \ldots, M_D \in E$, then there exists B\'ezier curves of degree
  $D$ passing through these points. 
\end{proposition}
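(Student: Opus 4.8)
The plan is to realize each such interpolating curve through polynomial interpolation, exactly as the preceding discussion suggests: I fix a \emph{sampling} of the parameter interval and then look for a control polygon whose Bézier curve hits $M_j$ at the $j$-th sample parameter. Concretely, I would first choose $D+1$ pairwise distinct values $t_0, \ldots, t_D \in [0,1]$ (for instance the uniform sampling $t_j = j/D$), and then seek control points $P_0, \ldots, P_D \in E$ satisfying $B((P_0, \ldots, P_D), t_j) = M_j$ for $j = 0, \ldots, D$.

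Using the Bernstein representation established above, namely $B((P_0,\ldots,P_D),t) = \sum_{i=0}^{D} P_i\, b_{i,D}(t)$, these interpolation conditions rewrite as $\sum_{i=0}^{D} b_{i,D}(t_j)\, P_i = M_j$. Collecting the unknown control points as the rows of a $(D+1)\times 2$ matrix $X$ and the data points $M_j$ as the rows of $Y$, this is the square linear system $VX = Y$, where $V = (b_{i,D}(t_j))_{0 \le j,\, i \le D}$ is a single $(D+1)\times(D+1)$ coefficient matrix acting simultaneously on both coordinate columns. So the whole statement reduces to producing one solution of $VX = Y$.

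The core step is to show that $V$ is invertible. I would argue through the evaluation map $\mathrm{ev}\colon \mathbbm{R}[t]_D \to \mathbbm{R}^{D+1}$, $p \mapsto (p(t_0), \ldots, p(t_D))$. Since the $t_j$ are distinct, any $p \in \mathbbm{R}[t]_D$ in its kernel has $D+1$ distinct roots and therefore vanishes identically; hence $\mathrm{ev}$ is injective, and as both spaces have dimension $D+1$ it is an isomorphism. Because $\{b_{0,D}, \ldots, b_{D,D}\}$ is a basis of $\mathbbm{R}[t]_D$, the matrix of $\mathrm{ev}$ read in this basis is precisely $V$; thus $V$ is invertible, and $X = V^{-1}Y$ gives a (unique, for this sampling) control polygon whose Bézier curve passes through all the $M_j$.

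I expect the invertibility of $V$ to be the only real obstacle: everything reduces to the fact that evaluation at $D+1$ distinct nodes is injective on polynomials of degree at most $D$, and the remaining steps are bookkeeping through the Bernstein basis. Finally, to match the remark that the interpolant is not unique, I would observe that each distinct choice of sampling $t_0, \ldots, t_D$ yields a generally different matrix $V$ and hence a different control polygon, so there are in fact infinitely many Bézier curves of degree $D$ through the given points, one family for each admissible sampling of $[0,1]$.
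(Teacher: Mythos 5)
Your argument is correct and is essentially the paper's own: the paper establishes this proposition via the immediately following Lemma~\ref{interpol}, whose proof sets up exactly the same linear system with the Bernstein--Vandermonde matrix $B_{\tmmathbf{t},D}$ and inverts it, with non-uniqueness across different samplings accounting for the plural ``curves''. Your justification of invertibility via the evaluation map on $\mathbbm{R}[t]_D$ is slightly more explicit than the paper's parenthetical appeal to the Vandermonde structure, but it is the same route.
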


\begin{lemma}
  \label{interpol}Let $t_0 = 0 < t_1 < \cdots < t_D = 1$, then there exists
  one and only one B\'ezier curve $B \left( \left(\begin{array}{c}
    P_0, \ldots, P_D
  \end{array}\right), t \right)$ of degree $D$ such that $B \left(
  \left(\begin{array}{c}
    P_0, \ldots, P_D
  \end{array}\right), t_i \right) = M_i, \forall i \in \left\{ 0, \ldots, D
  \right\}$.
\end{lemma}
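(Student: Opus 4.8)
The plan is to reduce the interpolation conditions to a single square linear system and then to prove that its coefficient matrix is invertible. By the earlier Proposition, every degree-$D$ Bézier curve can be written as $B\left(\left(\begin{array}{c}P_0,\ldots,P_D\end{array}\right),t\right)=\sum_{i=0}^{D}P_i\,b_{i,D}(t)$, so the conditions $B\left(\left(\begin{array}{c}P_0,\ldots,P_D\end{array}\right),t_j\right)=M_j$ for $j\in\{0,\ldots,D\}$ become $\sum_{i=0}^{D}b_{i,D}(t_j)\,P_i=M_j$. Writing $A=\left(b_{i,D}(t_j)\right)_{0\le j,i\le D}$ for the $(D+1)\times(D+1)$ matrix of Bernstein evaluations, this is the matrix equation $A\vec{P}=\vec{M}$, where $\vec{P}=(P_0,\ldots,P_D)^{\top}$ and $\vec{M}=(M_0,\ldots,M_D)^{\top}$ have entries in $E=\mathbbm{R}^2$. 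Since $E$ is a vector space and the system acts on each coordinate separately, it splits into two scalar systems with the \emph{same} matrix $A$; hence existence and uniqueness of a solution for arbitrary data $M_0,\ldots,M_D$ is equivalent to the invertibility of $A$.

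To prove $A$ is invertible I would interpret it as the matrix of the evaluation map $\mathrm{ev}\colon\mathbbm{R}[t]_D\to\mathbbm{R}^{D+1}$, $p\mapsto(p(t_0),\ldots,p(t_D))$, expressed in the Bernstein basis $\{b_{0,D},\ldots,b_{D,D}\}$ on the source (a basis by the earlier Proposition) and the standard basis on the target. Both spaces have dimension $D+1$, so it suffices to show $\mathrm{ev}$ is injective. If $p\in\ker(\mathrm{ev})$, then $p$ is a polynomial of degree at most $D$ vanishing at the $D+1$ distinct points $t_0,\ldots,t_D$; a nonzero such polynomial has at most $D$ roots, forcing $p=0$. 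Thus $\mathrm{ev}$ is an isomorphism, $A$ is invertible, and $\vec{P}=A^{-1}\vec{M}$ is the unique control polygon, which yields both existence and uniqueness.

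The argument is essentially complete, and the point I would foreground is the reduction step rather than any delicate estimate: one must check that vector-valued data over $E=\mathbbm{R}^2$ genuinely reduces to scalar interpolation governed by the identical matrix $A$, so that invertibility of $A$ recovers both coordinates of every $P_i$ simultaneously. An alternative, more computational route avoiding the abstract evaluation map is to factor $A=VC$, where $V=(t_j^{\,k})_{0\le j,k\le D}$ is the Vandermonde matrix and $C$ is the (invertible) change-of-basis matrix from the monomial basis $\{1,t,\ldots,t^D\}$ to the Bernstein basis; then $\det A=\det V\cdot\det C\neq 0$ because the $t_j$ are pairwise distinct. I expect the evaluation-map formulation to be the cleanest, with the Vandermonde factorization as a concrete fallback should an explicit nonvanishing determinant be preferred.
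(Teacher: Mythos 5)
Your proposal is correct and follows essentially the same route as the paper: both reduce the interpolation conditions to the square linear system governed by the Bernstein collocation matrix $\left(b_{i,D}(t_j)\right)_{0\le j,i\le D}$ and conclude by its invertibility. The paper merely asserts invertibility by calling it ``the Vandermonde matrix expressed in the Bernstein basis'' (your $A=VC$ fallback), whereas you supply the full justification via the kernel of the evaluation map; this is a welcome elaboration, not a different proof.
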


\begin{proof}
  Denote $M$ the $2 \times \left( D + 1 \right)$ matrix built with the
  coordinate of $M_i$ as $i^{\tmop{th}}$ row, i.e. $ M = \left( M_0, \ldots , M_D \right)^t$,
  and denote $P$ the $2 \times \left( D + 1 \right)$ matrix built with the
  coordinate of $P_i$ as $i^{\tmop{th}}$ row, i.e. $P = \left( P_0, \ldots, P_D\right)^t$.
  We consider the following matrix associated to $\tmmathbf{t}=
  \left(\begin{array}{c}
    t_0, \ldots, t_D
  \end{array}\right)$ :
  \begin{equation}
    B_{\tmmathbf{t}, D} = \left(\begin{array}{cccc}
      b_{0, D} \left( 0 \right) & b_{1, D} \left( 0 \right) & \cdots & b_{D,
      D} \left( 0 \right)\\
      b_{0, D} \left( t_1 \right) & b_{1, D} \left( t_1 \right) & \cdots &
      b_{D, D} \left( t_1 \right)\\
      \vdots & \vdots & \ddots & \vdots\\
      b_{0, D} \left( 1 \right) & b_{1, D} \left( 1 \right) & \cdots & b_{D,
      D} \left( 1 \right)
    \end{array}\right) \label{vander} .
  \end{equation}
  The matrix of equation \ref{vander} is invertible (it is the Vandermonde
  matrix express in the Bernstein basis) and clearly if $P$ is such that $B_{\tmmathbf{t}, D} P = M$, then $B \left( \left[ P_0, \ldots, P_D \right], t \right)$ give the wanted
  curve for the proof of the lemma.
\end{proof}

Remark that once $\tmmathbf{t}$ is know one can compute $B_{\tmmathbf{t},
D}^{- 1}$ once for all and that is possible to take advantage of its
Vandermonde-like structure in order to improve the cost of the multiplication
of a vector by $B_{\tmmathbf{t}, D}$. Generally, we use a regular subdivision
($t_i = \frac{i}{D}$) but there are more suitable choices in regard of the
stability of the computation. 
\section{Piecewize B\'ezier curves}

\subsection{Basics on piecewize B\'ezier curves}

Let $P_{0, 0}, \ldots, P_{0, N}$ and $P_{1, 0}, \ldots, P_{1, D} \in E$ such
that $P_{0, N} = P_{1, 0}$, we define the following parametrization of a
curve:
\[ \Gamma : \left\{ \begin{array}{l}
     \left[ 0, 1 \right] \rightarrow E\\
     t \mapsto \left\{ \begin{array}{l}
       B \left( \left(\begin{array}{c}
         P_{0, 0}, \ldots, P_{0, D}
       \end{array}\right), 2 t \right)  \text{\tmop{for}} t \in \left[ 0, 1 /
       2 \right]\\
       B \left( \left(\begin{array}{c}
         P_{1, 0}, \ldots, P_{1, D}
       \end{array}\right), 2 t - 1 \right)  \text{\tmop{for}} t \in \left[ 1 /
       2, 1 \right]
     \end{array} \right.
   \end{array} \right. . \]
When $N = D$ we say that this parametrization is uniform with respect to the
degree and often, we simply say uniform when it does not introduce ambiguity.
The curves parametrized by $B \left( \left(\begin{array}{c}
  P_{0, 0}, \ldots, P_{0, D}
\end{array}\right), t \right)$ and $B \left( \left(\begin{array}{c}
  P_{1, 0}, \ldots, P_{1, D}
\end{array}\right), t \right)$ are called the patches of $\mathcal{C}= \Gamma
\left( \left[ 0, 1 \right] \right)$. The set of control points of the patches
of $\mathcal{C}$ are called the control points of $\mathcal{C}$. This is a
continuous curve.

More generally, if $P_{0, 0}, \ldots, P_{0, N_1}, P_{1, 0}, \ldots, P_{1,
N_2}, \ldots, P_{l, 0}, \ldots, P_{l, N_l} \in E$, such that $P_{i, N_i} =
P_{i + 1, 0}$ for all $i \in \left\{ 0, \ldots, l - 1 \right\}$, we define:
\begin{equation}
  \Gamma \left( \left(\left( P_{0,0},\ldots,P_{0,D}\right),\ldots, \left( P_{N,0},\ldots,P_{N,D}\right)\right) , t \right) = B \left( \left(P_{i, 0}, \ldots, P_{i, D_i} \right), \frac{i}{l + 1} + \left( l + 1 \right) t \right)
\end{equation}
 for  $t \in \left[ \frac{i}{\left( l + 1 \right)}, \frac{\left(
  i + 1 \right)}{\left( l + 1 \right)} \right]$  and for all $i \in \left\{ 0, \ldots, l \right\}$ .
This defines a continuous parametrization. The curves parametrized by $B
\left( \left(\begin{array}{c}
  P_{i, 0}, \ldots, P_{i, D_i}
\end{array}\right), t \right)$ are called the patches of $\mathcal{C}= \Gamma
\left( \left[ 0, 1 \right] \right)$. Furthermore, if $P_{l, D_l} = P_{0, 0}$
we say that the curve $\mathcal{C}$ is closed or that it is a loop.

We denote $\mathcal{B}_{N, D}$ the set of uniform piecewize B\'ezier curves
built from $N$ patches of degree $D$. This clearly a finite dimensional
subvariety of $\mathcal{C}^0 \left( \left[ 0, 1 \right], E \right)$ as the
image of the following map:
\begin{equation}
  \Psi_{N, D} : \left\{ \begin{array}{l}
    \left( E^{D + 1} \right)^{N + 1} \longrightarrow \mathcal{C}^0 \left(
    \left[ 0, 1 \right], E \right)\\
    \left( \left( P_{i,j},  j=0 \ldots D\right), i=0 \ldots N \right)  \longmapsto \Gamma \left( \left( \left( P_{i,j},  j=0 \ldots D\right), i=0 \ldots N \right) , t \right)
  \end{array} \right.
\end{equation}

Clearly, $\Psi_{N, D}$ is onto from $\left( E^{D + 1} \right)^{N + 1}$ to
$\mathcal{B}_{N, D} \subset \mathcal{C}^0 \left( \left[ 0, 1 \right], E
\right)$. It not very difficult to check that $\Psi_{N, D}$ is almost always
one-to-one from $\left( E^{D + 1} \right)^{N + 1}$ to $\mathcal{B}_{N, D}$.
So, $\Psi_{N, D}$ is almost everywhere a diffeomorphism between $\left( E^{D +
1} \right)^{N + 1}$ and $\mathcal{B}_{N, D}$. This embed $\mathcal{B}_{N, D}$
with a manifold structure (even a submanifold structure in $\mathcal{C}^0
\left( \left[ 0, 1 \right], E \right)$).

The density of polynomials in the set of continuous functions imply that for
each $\Phi : \left[ 0, 1 \right] \longrightarrow E$ there exists $\left(
\Gamma_n \left( t \right) \right)_{n \in \mathbbm{N}}$ such that $\underset{n
\rightarrow \infty}{\lim} \left\| \Phi - \Gamma_n \right\|_2 = 0$, in a way
that considering B\'ezier curves is not a drastic restriction.

\subsection{Sampling map and retraction to $\Psi_{N, D}$}

\begin{definition}
  Let $t_0 = 0 < t_1 < \ldots < t_D = 1$, we denote $\tmmathbf{t}= \left( t_0,
  \ldots, t_D \right)$ the associated subdivision of $\left[ 0, 1 \right]$,
  then we define the sampling map $\mathcal{S}_{\tmmathbf{t}} :
  \mathcal{B}_{1, D} \longrightarrow E^{D + 1}$ by $\mathcal{S}_{\tmmathbf{t}}
  \left( \Gamma \right) = \left(\begin{array}{c}
    \Gamma \left( t_0 \right), \ldots, \Gamma \left( t_D \right)
  \end{array}\right)$.
\end{definition}

\begin{proposition}
  \label{commdiag}The following diagram is commutative:
  \begin{equation}
    \begin{array}{ccc}
      E^{D + 1} & \begin{array}{c}
        \Psi_{1, D}\\
        \longrightarrow
      \end{array} & \mathcal{C}^0 \left( \left[ 0, 1 \right], E \right)\\
      & \begin{array}{c}
        \underset{B_{\tmmathbf{t}, D}}{\searrow}
      \end{array} & \begin{array}{cc}
        \longdownarrow & \mathcal{S}_{\tmmathbf{t}}
      \end{array}\\
      &  & E^{D + 1}
    \end{array}
  \end{equation}
  and $\Psi_{N, D}$ is an invertible linear isomorphism between
  $\mathcal{B}_{1, D} = \tmop{Im} \left( \Psi_{1, D} \right)$ and $E^{D + 1}$
  and its inverse is $\Psi_{N, D}^{- 1} = B_{\tmmathbf{t}, D}^{- 1} \circ
  \mathcal{S}_{\tmmathbf{t}}$. 
\end{proposition}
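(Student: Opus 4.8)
The plan is to reduce the whole statement to a single computation, namely the identity $\mathcal{S}_{\tmmathbf{t}} \circ \Psi_{1,D} = B_{\tmmathbf{t},D}$, and then to read off linearity, bijectivity and the formula for the inverse from it. First I would make explicit the identifications involved: a point of the source $E^{D+1}$ is a list of control points $(P_0,\ldots,P_D)$, which I record as the $(D+1)\times 2$ matrix $P=(P_0,\ldots,P_D)^t$ exactly as in Lemma \ref{interpol}, and the matrix $B_{\tmmathbf{t},D}$ of equation \ref{vander} is then read as the linear endomorphism $P \mapsto B_{\tmmathbf{t},D}P$ of this space.

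Next I would verify commutativity by direct substitution. By the proposition expressing a B\'ezier curve in the Bernstein basis, $\Psi_{1,D}(P_0,\ldots,P_D)$ is the curve $\Gamma(t)=\sum_{i=0}^D P_i\, b_{i,D}(t)$. Evaluating the sampling map gives $\mathcal{S}_{\tmmathbf{t}}(\Gamma)=(\Gamma(t_0),\ldots,\Gamma(t_D))$, and $\Gamma(t_j)=\sum_{i=0}^D b_{i,D}(t_j)\,P_i$ is precisely the $j$-th row of $B_{\tmmathbf{t},D}P$, since the $j$-th row of $B_{\tmmathbf{t},D}$ is $(b_{0,D}(t_j),\ldots,b_{D,D}(t_j))$. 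Hence $\mathcal{S}_{\tmmathbf{t}}\circ\Psi_{1,D}=B_{\tmmathbf{t},D}$, which is the asserted commutativity of the triangle.

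From this I would deduce the remaining claims with no further computation. Linearity of $\Psi_{1,D}$ is immediate from the expression $\sum_i P_i b_{i,D}(t)$, which is linear in the $P_i$. For injectivity I would invoke that $\{b_{0,D},\ldots,b_{D,D}\}$ is a basis of $\mathbbm{R}[t]_D$: if $\Psi_{1,D}(P)$ is the zero curve then $\sum_i P_i b_{i,D}\equiv 0$ forces every $P_i=0$ by linear independence. Since $\mathcal{B}_{1,D}=\tmop{Im}(\Psi_{1,D})$ by definition, $\Psi_{1,D}$ is a linear isomorphism onto $\mathcal{B}_{1,D}$. Finally, because $B_{\tmmathbf{t},D}$ is invertible (Lemma \ref{interpol}), the commutativity relation yields $B_{\tmmathbf{t},D}^{-1}\circ\mathcal{S}_{\tmmathbf{t}}\circ\Psi_{1,D}=\mathrm{Id}_{E^{D+1}}$; a left inverse of a bijection is automatically its two-sided inverse, so $\Psi_{1,D}^{-1}=B_{\tmmathbf{t},D}^{-1}\circ\mathcal{S}_{\tmmathbf{t}}$ on $\mathcal{B}_{1,D}$.

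The statement involves essentially no deep obstacle: the only thing that genuinely requires care is the bookkeeping of conventions, namely keeping straight that the source $E^{D+1}$ is the space of control polygons while the copy at the bottom of the diagram is the space of sampled points, and that $B_{\tmmathbf{t},D}$ acts on the control-point side by left multiplication. I would also silently correct the evident typo by which $\Psi_{N,D}$ in the statement must read $\Psi_{1,D}$.
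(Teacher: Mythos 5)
Your proposal is correct and follows essentially the same route as the paper: both hinge on the single computation $\mathcal{S}_{\tmmathbf{t}}\circ\Psi_{1,D}=B_{\tmmathbf{t},D}$ obtained by writing the curve in the Bernstein basis and evaluating at the sample points, then deduce the inverse formula from the invertibility of $B_{\tmmathbf{t},D}$. Your added details (explicit linearity, injectivity via independence of the Bernstein polynomials, and the correction of $\Psi_{N,D}$ to $\Psi_{1,D}$) only make explicit what the paper leaves implicit.
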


\begin{proof}
  Let $\Gamma \left( t \right) = \underset{j = 1}{\overset{N}{\sum}}
  \underset{i = 0}{\overset{D}{\sum}} P_{j, i} b_{i, D} \left( t \right)$ $i.e
  \nosymbol .$ $\Gamma = \Psi_{N, D} \left( \left( P_{0, 0}, \ldots, P_{0, D}
  \right), \ldots, \left( P_{N, 0}, \ldots, P_{N, D} \right) \right)$, then
  clearly $\mathcal{S}_{\tmmathbf{t}} \left( \Gamma \right) = B_{\tmmathbf{t},
  D} \tmmathbf{P}$ where $\tmmathbf{P}= \left(\begin{array}{c}
    P_{0, 0}^t\\
    \vdots\\
    P_{N, D}^t
  \end{array}\right)$, and so $\mathcal{S}_{\tmmathbf{t}} \circ \Psi_{N, D}
  \left( \tmmathbf{P} \right) = B_{\tmmathbf{t}, N} \left( \tmmathbf{P}
  \right)$. The remainder of the theorem is a consequence of the fact that
  $B_{\tmmathbf{t}, N}$ is a linear isomorphism. 
\end{proof}

\begin{proposition}
  \label{samplemain}Let $t_{1, 0} = 0 < t_{1, 1} < \cdots < t_{1, D} = 1 / N =
  t_{2, 0} < t_{2, 1} < \cdots < t_{2, D} = 2 / N = t_{3, 0} < \cdots < t_{N,
  D} = 1$, we denote $\tmmathbf{t}= \left( \tmmathbf{t}_1, \ldots,
  \tmmathbf{t}_N \right)$ where $\tmmathbf{t}_i = \left( t_{0, i}, \ldots,
  t_{D, i} \right)$ and we define the sampling map $\mathcal{S}_{\tmmathbf{t},
  N} : \mathcal{B}_{N, D} \longrightarrow \left( E^D \right)^N$ by
  $\mathcal{S}_{\tmmathbf{t}, N} \left( \Gamma \right)
  =\mathcal{S}_{\tmmathbf{t}_1} \times \cdots \times
  \mathcal{S}_{\tmmathbf{t}_N} \left( \Gamma \right) = \left(\begin{array}{c}
    \Gamma \left( t_{1, 0} \right), \ldots, \Gamma \left( t_{N, D} \right)
  \end{array}\right)$. Then $\mathcal{S}_{\tmmathbf{t}, N}$ is a linear
  isomorphism between $\mathcal{B}_{N, D}$ and $\left( E^{D + 1} \right)^{N +
  1}$. 
\end{proposition}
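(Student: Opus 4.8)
The plan is to reduce the statement to the single-patch case already established in Proposition~\ref{commdiag}, exploiting the fact that $\mathcal{S}_{\tmmathbf{t}, N}$ was deliberately defined as the product $\mathcal{S}_{\tmmathbf{t}_1} \times \cdots \times \mathcal{S}_{\tmmathbf{t}_N}$ of the per-patch sampling maps. First I would fix $\Gamma \in \mathcal{B}_{N, D}$ and observe that, on the $i$-th parameter interval $\left[ \frac{i-1}{N}, \frac{i}{N} \right]$, the restriction of $\Gamma$ is, after the affine reparametrization $t \mapsto N t - (i-1)$, an element of $\mathcal{B}_{1, D}$ with control polygon $\left( P_{i, 0}, \ldots, P_{i, D} \right)$. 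Consequently the $i$-th block of the sampling vector, namely $\left( \Gamma(t_{i, 0}), \ldots, \Gamma(t_{i, D}) \right)$, equals $B_{\tmmathbf{t}_i, D} \tmmathbf{P}_i$, where $\tmmathbf{P}_i$ collects the control points of the $i$-th patch; this is exactly the content of Proposition~\ref{commdiag} applied patch by patch.

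From this block description linearity is immediate, since each block acts as the linear map $B_{\tmmathbf{t}_i, D}$ in the appropriate coordinates and a product of linear maps is linear. The candidate inverse is the block map that applies $B_{\tmmathbf{t}_i, D}^{-1}$ to the $i$-th block to recover $\tmmathbf{P}_i$, each $B_{\tmmathbf{t}_i, D}$ being invertible by the Vandermonde argument of Lemma~\ref{interpol}. So on each patch separately the correspondence between sample vector and control polygon is a linear isomorphism.

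The step that actually carries the content of the word \emph{piecewise} is the gluing, and this is where I expect the only real subtlety to lie. The control points of a genuine element of $\mathcal{B}_{N, D}$ are not free: they must satisfy the continuity relations $P_{i, D} = P_{i+1, 0}$. I would show that these relations correspond precisely to the coincidences among the sample nodes built into the hypothesis, namely $t_{i, D} = i / N = t_{i+1, 0}$. The key observation is the endpoint-interpolation property of B\'ezier curves: since $b_{j, D}(0) = 0$ for $j \neq 0$ and $b_{j, D}(1) = 0$ for $j \neq D$, read off from the Bernstein expansion $\Gamma = \sum_j P_j b_{j, D}$, one has $\Gamma(t_{i, D}) = P_{i, D}$ and $\Gamma(t_{i+1, 0}) = P_{i+1, 0}$. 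Hence the shared sample $\Gamma(t_{i, D}) = \Gamma(t_{i+1, 0})$ both encodes and forces the continuity condition, so the patchwise reconstruction via the $B_{\tmmathbf{t}_i, D}^{-1}$ automatically produces a continuous curve lying in $\mathcal{B}_{N, D}$ rather than $N$ disjoint arcs.

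Finally I would reconcile the bookkeeping with the image space. Counting the distinct sample points gives $N(D+1) - (N-1) = ND + 1$ points of $E$, which matches $\dim \mathcal{B}_{N, D}$, the $N(D+1)$ control points modulo the $N-1$ gluing relations; the codomain should therefore be read as the subspace of sample tuples whose values agree at the shared nodes $i/N$, and on this subspace the forward map and the block inverse are mutually inverse linear maps. Assembling the per-patch isomorphisms with the continuity correspondence then yields that $\mathcal{S}_{\tmmathbf{t}, N}$ is a linear isomorphism, which completes the argument.
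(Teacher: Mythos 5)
Your proof takes essentially the same route as the paper, whose entire argument is that $\mathcal{S}_{\tmmathbf{t},N}$ is a Cartesian product of the single-patch isomorphisms of Proposition~\ref{commdiag} and hence an isomorphism with inverse the product of the inverses. Your additional care about the gluing constraints $P_{i,D}=P_{i+1,0}$ and the matching identifications among the sample nodes (and the resulting reading of the codomain as the subspace of tuples agreeing at the shared nodes $i/N$) is a genuine refinement that the paper's one-line proof silently skips, and it correctly accounts for why the dimensions match.
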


\begin{proof}
  It is a simple consequence of the fact that a cartesian product of
  isomophisms is an isomorphisms. The inverse map is the cartesian product of
  the inverse of the component maps. 
\end{proof}

The proposition \ref{samplemain} is important since it allows to give to
$\mathcal{B}_{N, D}$ a vector space structure isomorphic to $\left( E^{D + 1}
\right)^{N + 1}$ (and so, of finite dimension). For instance, it allows to
transport distance and so on in $\mathcal{B}_{N, D}$.

In fact, we focus here into a speciale type of sampling. We consider such an
sampling where $t_{i, 0} = \frac{i}{N}$ and $t_{i, D} = \frac{i + 1}{N}$ and
$t_{i, j} = t_{i, 0} + \frac{j}{ND}$. We will call this kind of sampling a
regular sampling and we will omit the subscript $\tmmathbf{t}$ everywhere
using these samplings. We use these sampling to simply the presentation, but
all the results has equivalent statements with general sampling. Representing
each patch by its control polygon, the matrix of $\mathcal{S}_{\tmmathbf{t},
N}$ is $N$ times the cartesian production of the map $B_{1, D}$ with itself:
$B_{1, D} \times \cdots \times B_{1, D}$. This gives us an easy way to solve
the following interpolation problem.

\begin{problem}
  \label{patchinter}Given $M_{0, 0}, \ldots, M_{0, D}, \ldots, M_{N, 0},
  \ldots, M_{N, D} \in E$, find $\Gamma \in \mathcal{B}_{N, D}$ such that
  $\mathcal{S}_N \left( \Gamma \right) = \left(\begin{array}{c}
    M_{0, 0}^t\\
    \vdots\\
    M_{N, D}^t
  \end{array}\right)$.
\end{problem}

\begin{proposition}
  \label{patchinterpol}The solution of problem \ref{patchinter} is given by
  the image by $\Psi_{N, D}$ of:
  \begin{equation}
    \left(\begin{array}{ccc}
      B_{1, D}^{- 1} &  & \\
      & \ddots & \\
      &  & B_{1, D}^{- 1}
    \end{array}\right) \left(\begin{array}{c}
      M_{0, 0}^t\\
      \vdots\\
      M_{N, D}^t
    \end{array}\right) .
  \end{equation}
\end{proposition}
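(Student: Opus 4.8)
The plan is to exploit the block structure of the sampling map in control-point coordinates, so that the interpolation problem decouples into one single-patch problem per patch, each solved by the inverse of the Bernstein--Vandermonde matrix $B_{1,D}$. First I would invoke Proposition~\ref{samplemain}: since $\mathcal{S}_N$ is a linear isomorphism from $\mathcal{B}_{N,D}$ onto $\left( E^{D+1} \right)^{N+1}$, Problem~\ref{patchinter} has exactly one solution, and it only remains to write it explicitly. Parametrizing the unknown curve as $\Gamma = \Psi_{N,D}\left( \tmmathbf{P} \right)$ with $\tmmathbf{P} = \left( P_{0,0}^t, \ldots, P_{N,D}^t \right)^t$, the problem becomes the linear system $\mathcal{S}_N \circ \Psi_{N,D}\left( \tmmathbf{P} \right) = M$, where $M = \left( M_{0,0}^t, \ldots, M_{N,D}^t \right)^t$.

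Next I would identify the matrix of $\mathcal{S}_N \circ \Psi_{N,D}$. The essential observation is that a regular sample $t_{i,j} = \frac{i}{N} + \frac{j}{ND}$ lies in the $i$-th subinterval $\left[ \frac{i}{N}, \frac{i+1}{N} \right]$, on which $\Gamma$ coincides with the reparametrized $i$-th patch; after the affine change of parameter the local argument is exactly $\frac{j}{D}$. Hence $\Gamma\left( t_{i,j} \right)$ depends only on the control points $P_{i,0}, \ldots, P_{i,D}$ of that single patch, and by Proposition~\ref{commdiag} the map sending $\left( P_{i,0}^t, \ldots, P_{i,D}^t \right)^t$ to $\left( \Gamma\left( t_{i,0} \right)^t, \ldots, \Gamma\left( t_{i,D} \right)^t \right)^t$ is precisely $B_{1,D}$. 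Therefore $\mathcal{S}_N \circ \Psi_{N,D}$ is block diagonal with one copy of $B_{1,D}$ per patch, confirming the description given just before Problem~\ref{patchinter}.

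It then suffices to invert this block-diagonal matrix. Each block $B_{1,D}$ is invertible by Lemma~\ref{interpol}, so the whole matrix is invertible and its inverse is block diagonal with blocks $B_{1,D}^{-1}$, that is, the matrix displayed in the statement. Applying it to $M$ recovers the control points $\tmmathbf{P}$, and $\Gamma = \Psi_{N,D}\left( \tmmathbf{P} \right)$ is the desired solution. The step I expect to require the most care is the decoupling claimed in the second paragraph: one must check that every interior sample belongs to a unique patch, and that at the shared nodes $t_{i,D} = t_{i+1,0} = \frac{i+1}{N}$ the continuity condition $P_{i,D} = P_{i+1,0}$ forces the data to satisfy $M_{i,D} = M_{i+1,0}$, so that the two overlapping rows are consistent and the system genuinely splits into independent per-patch blocks with no surviving off-diagonal coupling.
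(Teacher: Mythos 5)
Your proposal is correct and follows essentially the same route as the paper, which gives no explicit proof but relies on the observation stated just before Problem~\ref{patchinter} that, in control-polygon coordinates, the regular sampling map is the block-diagonal product $B_{1,D} \times \cdots \times B_{1,D}$, so the solution is obtained by applying the block-diagonal inverse. Your additional remark about consistency at the shared nodes $t_{i,D} = t_{i+1,0}$ is a reasonable point of care that the paper leaves implicit.
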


The proposition \ref{patchinterpol} implies that $\chi_{\tmmathbf{t}, D} =
B_{t, D}^{- 1} \circ \mathcal{S}_{\tmmathbf{t}} : \mathcal{B}_{N, D}
\longrightarrow E^{D + 1}$ is such that $\Psi_{1, D} \circ \chi_{\tmmathbf{t},
D} = \tmop{Id}_{E^{D + 1}}$. It is easy to extend this result to $\Psi_{N, D}$
using $B_{N, D} = B_{1, D} \times \cdots \times B_{1, D}$ satisfying $B_{N,
D}^{- 1} = B_{1, D}^{- 1} \times \cdots \times B_{1, D}^{- 1}$. \

This approach allows to project any element of $\mathcal{C}^0 \left( \left[
0, 1 \right], E \right)$ on $\mathcal{B}_{N, D}$ using $\mathcal{S}_N$. Let
$\Lambda \in \mathcal{C}^0 \left( \left[ 0, 1 \right], E \right)$, then
denoting $\tmmathbf{M}= \left( \Gamma(0), \Gamma(\frac{1}{N D}), \ldots, \Gamma(\frac{N D -1}{ND}), \Gamma(1)\right)^t$ we have $\tmmathbf{P}=\mathcal{S}_N^{- 1} \left(
\tmmathbf{M} \right) \in \mathcal{B}_{N, D}$ is such that $\Psi_{N, D} \left(
\tmmathbf{P} \right) = B \left( \tmmathbf{P}, t \right)$ coincides with
$\Lambda \left( \left[ 0, 1 \right] \right)$ on at least $(D+1)$ points counted
with multiplicities on each patch. This is only the fact that $\chi_{\tmmathbf{t}, D}$ can
be extend to $\mathcal{C}^0 \left( \left[ 0, 1 \right], E \right)$. \

The main claim is that instead of working directly with $\mathcal{B}_{N, D}$,
it is easier to work on the ``set of control polygons'', namely $E^{D + 1}$
using sampling and interpolation giving linear isomorphism between control
polygons and sampling points on the curves. In what follows, we will always
take this point of view.

\subsection{Tangent space $T\mathcal{B}_{N, D}$ and deformation of curve}

\ \ Recall that $\Psi_{N, D}$ define a linear isomorphism between the ``space
of control polygons'' $\left( E^{D + 1} \right)^{N + 1}$ and the space of
piecewize B\'ezier curves $\mathcal{B}_{N, D}$. We already saw that for any
$\gamma \left( t \right) \in \mathcal{B}_{N, D}$ then $\tmmathbf{P} \in \left(
E^{D + 1} \right)^{N + 1}$ such that $\Psi_{N, D} \left( \tmmathbf{P} \right)
= \gamma \left( t \right)$ is given by $B_{N, D}^{- 1} \circ \mathcal{S}_N
\left( \gamma \right)$. This give the following proposition:

\begin{proposition}
  We have that $T \Psi_{N, D} : T \left( E^{D + 1} \right)^{N + 1}
  \longrightarrow T\mathcal{B}_{N, D}$ is such that from any $\gamma \in
  \mathcal{B}_{N, D}$ we have $T \Psi_{N, D}^{- 1} \left( \gamma \right) :
  T_{\gamma} \mathcal{B}_{N, D} \longrightarrow T_{\chi_{N, D} \left( \gamma
  \right)} \left( E^{D + 1} \right)^{N + 1}$ is given by $T \Psi_{N, D} \left(
  \chi_{N, D} \left( \gamma \right) \right)^{- 1} \left( \varepsilon \right) =
  B_{N, D}^{- 1} \circ \mathcal{S}_N \left( \varepsilon \right) = \chi_{N, D}
  \left( \varepsilon \right)$ for any $\varepsilon \left( t \right) \in
  T_{\gamma} \mathcal{B}_{N, D}$ and this is a linear isomorphism. 
\end{proposition}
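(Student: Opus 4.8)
The plan is to exploit the fact, established in Proposition \ref{samplemain} and the subsequent discussion, that $\Psi_{N,D}$ is not merely a diffeomorphism but a genuine \emph{linear} isomorphism between the finite-dimensional vector spaces $\left( E^{D+1} \right)^{N+1}$ and $\mathcal{B}_{N,D}$, with inverse $\chi_{N,D} = B_{N,D}^{-1} \circ \mathcal{S}_N$. For a linear map between vector spaces the derivative at every point coincides with the map itself, so the entire statement reduces to transporting this observation through the canonical identifications of the tangent spaces involved.

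First I would record the standard canonical identifications $T_{\tmmathbf{P}} \left( E^{D+1} \right)^{N+1} \cong \left( E^{D+1} \right)^{N+1}$ and, since $\mathcal{B}_{N,D}$ carries the vector space structure inherited from $\left( E^{D+1} \right)^{N+1}$, $T_{\gamma} \mathcal{B}_{N,D} \cong \mathcal{B}_{N,D}$ at every $\gamma$. Under these identifications a tangent vector $\varepsilon \in T_{\gamma} \mathcal{B}_{N,D}$ is itself a piecewize B\'ezier curve in $\mathcal{B}_{N,D}$, representing an infinitesimal deformation of $\gamma$; this is exactly what makes the sampling map $\mathcal{S}_N$ applicable to $\varepsilon$.

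Next I would differentiate $\Psi_{N,D}$. Writing $\Psi_{N,D} \left( \tmmathbf{P} \right)$ patchwise as a sum $\sum_{i,j} P_{i,j} b_{j,D}$, the map is linear in $\tmmathbf{P}$, so its differential $T_{\tmmathbf{P}} \Psi_{N,D}$ at any base point equals $\Psi_{N,D}$ once the identifications above are made. Inverting and using $\Psi_{N,D}^{-1} = \chi_{N,D} = B_{N,D}^{-1} \circ \mathcal{S}_N$ then yields $\left( T_{\chi_{N,D}(\gamma)} \Psi_{N,D} \right)^{-1} \left( \varepsilon \right) = B_{N,D}^{-1} \circ \mathcal{S}_N \left( \varepsilon \right) = \chi_{N,D} \left( \varepsilon \right)$, and the linearity of this formula gives the asserted isomorphism between $T_{\gamma} \mathcal{B}_{N,D}$ and $T_{\chi_{N,D}(\gamma)} \left( E^{D+1} \right)^{N+1}$.

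I expect the only real subtlety to be definitional rather than computational: the verification that $T_{\gamma} \mathcal{B}_{N,D}$ is correctly identified with $\mathcal{B}_{N,D}$ and that $\mathcal{S}_N$ and $B_{N,D}^{-1}$ genuinely act on tangent vectors. Because $\mathcal{B}_{N,D}$ is a linear subspace, and $\varepsilon$ is a curve of the same type, the extension of the sampling-and-interpolation procedure to $\varepsilon$ is the same construction already used for curves, so nothing new is needed; one only has to note that the commutative diagram of Proposition \ref{commdiag} then applies verbatim at the level of tangent vectors, which closes the argument.
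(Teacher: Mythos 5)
Your proposal is correct and takes essentially the same route as the paper: the paper gives no separate proof for this proposition, deriving it directly from the linearity of $\Psi_{N,D}$ and the formula $\Psi_{N,D}^{-1} = B_{N,D}^{-1}\circ\mathcal{S}_{N}$ established in the preceding discussion, which is exactly the observation (tangent map of a linear isomorphism equals the map itself under the canonical identifications) that your argument spells out.
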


An element of $\varepsilon \left( t \right) \in T_{\gamma} \mathcal{B}_{N, D}$
is called a deformation curve. In fact, this proposition allows to express,
given a piecewize B\'ezier curve and a deformation, how to deform its control
polygon. This is an essential step proving that manipulating piecewize
B\'ezier curve, it is enough to manipulate its control polygon. This is the
object of the following lemma.

\begin{lemma}
  Let $\tmmathbf{P} \in \left( E^{D + 1} \right)^{N + 1}$, $\gamma \left( t
  \right) = \Psi_{N, D} \left( \tmmathbf{P} \right) = B \left( \tmmathbf{P}, t
  \right) \in \mathcal{B}_{N, D}$ and $\varepsilon \left( t \right) \in
  T_{\gamma} \mathcal{B}_{N, D}$, then:
  \begin{enumerateroman}
    \item $\varepsilon \left( t \right) = \Psi_{N, D} \left( \chi_{N, D}
    \left( \varepsilon \right) \right)$.
    
    \item $\gamma \left( t \right) + \varepsilon \left( t \right) = \Psi_{N,
    D} \left( \tmmathbf{P}+ \chi_{N, D} \left( \varepsilon \right) \right)$.
  \end{enumerateroman}
\end{lemma}

This lemma explain how to lift a deformation from the space of curves to the
space of control polygons. The vector space structure of both the space of
control polygons $\left( E^{D + 1} \right)^{N + 1}$ and of piecewize B\'ezier
curves $\mathcal{B}_{N, D}$ allows to avoid the use of computationally
difficult concept as exponential map between manifold and its tangent space
and so on. This structure has also to define a simple notion of distance
between two such curves.

\section{Applications to shape optimization}

In this section, we show how the preceding formalism can be exploited in the
context of shape optimization. An application to a problem of image
segmentation is presented to illustrate our purpose.

\subsection{Shape optimization problem}

A shape optimization problem consists in, given a set of admissible shapes
$\mathcal{A}$ and a functional $F : \mathcal{A} \rightarrow \mathbbm{R}^+$
find a shape $\alpha \in \mathcal{A}$ such that for all other shapes $\beta
\in \mathcal{A}$, we have $F \left( \alpha \right) \leqslant F \left( \beta
\right)$. Generally, one try to give to the space of admissible shape a
structure of manifold in a way to be able to compute a ``shape gradient'' ,
$\nabla F \left( \beta \right)$, expressing the evolution of the criterium $F$
with respect to a deformation of the shape $\beta$. It is to say that $\nabla
F \left( \beta \right)$ associates to every point $M \in \beta$ a deformation
vector $\nabla F \left( \beta \right) \left( M \right) \in T_M E$. The
computation of such a gradient can require sophisticated computation since
very often, even the computation of the criterium itself require to solve a
system partial differential equations. Many problem can be expressed as a
shape optimization problem. Classical approach to solve this kind of problem
is to use $\nabla F \left( \beta \right)$, when it is computable, in a
gradient method to find a local minimum.

To keep the presentation as simple as possible, we focus on geometric
optimisation, i.e. the topology of the shape is fixed, in the case where the
frontier of the admissible shapes are continuous Jordan curves. But the
framework presented here can be extended to topological optimization as it is
shown in \cite{LY10} for a special application on a problem of image segmentation. The
case treated here received attention because of its deep links with images
segmentation and shape recognition (see \cite{LY10,YMSM08,YM05} for instance).

We denote $\mathcal{C}_J^0 \left( \left[ 0, 1 \right], E \right)$ the set of
function parametrizing a Jordan curve and $\mathcal{B}^c_{N, D} = \left\{
\gamma \in \mathcal{B}_{N, D}  \left| \right. \gamma \left( t \right) = \gamma
\left( s \right) \text{\tmop{with}} s \neq t \Leftrightarrow \left( \left. t =
0 \right. \text{\tmop{and}} s = 1 \right)  \text{\tmop{or}} \left( t = 1
\text{\tmop{and}} s = 0 \right) \right\}$. We have $\mathcal{B}_{N, D}^c
\subset \mathcal{C}_J^0 \left( \left[ 0, 1 \right], E \right)$. We denote:
\[H_{N, D} = \left\{ \left( \left( P_{i,j},
j = 0 \ldots D \right), i = 0 \ldots N \right) \in \left( E^D \right)^N \left| \right. P_{0, 0} = P_{N, D} \right\}\]
which is a linear subspace of $\left( E^{D+1} \right)^{N+1}$. We then denote $\Psi_{N,
D}^c = \Psi_{N, D} \left|_{H_{N, D}} \right.$. As above, $\Psi_{N, D}^c$
define a linear isomorphism between $H_{N, D}$ and $\mathcal{B}_{N, D}^c$
using $\mathcal{S}_N \left|_{\mathcal{B}_{N, D}^c} \right.$ and the same
$B_{N, D}$ to define its converse explicitly.

\subsection{Vector field on $\mathcal{B}_{N, D}$ lifted from the shape
gradient}

Let $\nabla F$ be a shape, then for each $\alpha \in \mathcal{C}^0_J \left(
\left[ 0, 1 \right], E \right)$ and for any $M \in \alpha \left( \left[ 0, 1
\right] \right)$, \ $\nabla F$ associate to $M$ an element $\nabla F \left(
\alpha \right) \left( M \right) \in T_M E$. Consider now $\alpha \in
\mathcal{B}_{N, D}^c$ and $\left( \left( M_{0, 0}, \ldots, M_{0, D} \right),
\ldots, \left( M_{N, 0}, \ldots, M_{N, D} \right) \right) =\mathcal{S}_N
\left( \alpha \right)$, then $M_{0, 0} = M_{N, D}$. We 
\[ \mathcal{T}_{N,F} \left( \alpha \right) = \left( \left( \nabla F \left( \alpha \right) \left(
M_{i,j} \right),j=0 \ldots D \right), i=0 \ldots D  \right) \]
This representes the sampling of the deformation of the curve
implied by the shape gradient $\nabla F$ to $\alpha$. It is not difficult to
see that $\mathcal{T}_{N, F} \left( \alpha \right) \in T_{\mathcal{S}_N \left(
\alpha \right)} \left( \left( E^D \right)^N \right)$. 
%Then $B_{N, D}^{- 1}
%\left( \mathcal{T}_{N, F} \left( \alpha \right) \right) =\tmmathbf{Q}= \left(
%\left( Q_{0, 0}, \ldots, Q_{0, D} \right), \ldots, \left( Q_{N, 0}, \ldots,
%Q_{N, D} \right) \right)$ is such that $Q_{N, D} = Q_{0, 0}$ and so $B_{N,
%D}^{- 1} \left( \mathcal{T}_{N, F} \left( \alpha \right) \right) \in
%T_{\tmmathbf{P}} \left( H_{N, D} \right)$ where $\tmmathbf{P}= \left( \left(
%P_{0, 0}, \ldots, P_{0, D} \right), \ldots, \left( P_{N, 0}, \ldots, P_{N, D}
%\right) \right) \in H_{N, D}$ is such that $\tmmathbf{P}=\mathcal{S}_N \left(
%B_{N, D}^{- 1} \left( \alpha \right) \right) = \Psi_{N, D}^{- 1} \left( \alpha
%\right) = \left( \Psi_{N, D}^c \right)^{- 1} \left( \alpha \right)$. A direct
%consequence of this is the following theorem:

\begin{theorem}
  To each shape gradient $\nabla F$ the map $B_{N, D}^{- 1} \circ
  \mathcal{T}_{N, F}$ associate a vector field on $H_{N, D}$ which correspond
  to a vector field $V_F$ on $\mathcal{B}_{N, D}^c$ through $T \Psi_{N, D}^c$.
  
\end{theorem}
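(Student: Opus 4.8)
The plan is to produce the vector field in two stages: first establish that $W_F \assign B_{N, D}^{-1} \circ \mathcal{T}_{N, F}$ is a genuine vector field on the linear subspace $H_{N, D}$, and then transport it to $\mathcal{B}_{N, D}^c$ by the tangent map of the linear isomorphism $\Psi_{N, D}^c$. Throughout I identify a curve $\alpha \in \mathcal{B}_{N, D}^c$ with its control polygon $\tmmathbf{P} = \chi_{N, D} \left( \alpha \right) = B_{N, D}^{-1} \circ \mathcal{S}_N \left( \alpha \right) \in H_{N, D}$, which is legitimate because $\Psi_{N, D}^c$ is a linear isomorphism with inverse $\chi_{N, D}$.

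First I would check that $W_F \left( \alpha \right)$ is attached at the correct base point and is a tangent vector there. By the remark preceding the statement, $\mathcal{T}_{N, F} \left( \alpha \right) \in T_{\mathcal{S}_N \left( \alpha \right)} \left( \left( E^D \right)^N \right)$. Since $B_{N, D} = B_{1, D} \times \cdots \times B_{1, D}$ is a linear isomorphism between the space of control polygons and the sample space, its inverse coincides with its own differential; hence $B_{N, D}^{-1}$ sends this tangent vector to a tangent vector attached at $B_{N, D}^{-1} \left( \mathcal{S}_N \left( \alpha \right) \right) = \tmmathbf{P}$. Thus $W_F \left( \alpha \right) \in T_{\tmmathbf{P}} \left( \left( E^{D + 1} \right)^{N + 1} \right)$, so $W_F$ assigns to each point a tangent vector at that very point.

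The hard part will be showing that this tangent vector is tangent to the submanifold $H_{N, D}$, that is, that it lies in the linearized closure subspace $\left\{ P_{0, 0} = P_{N, D} \right\}$. Here I would exploit the endpoint interpolation property of the Bernstein basis: because $b_{0, D} \left( 0 \right) = 1$, $b_{D, D} \left( 1 \right) = 1$ and all other basis functions vanish at those abscissae, the first and last control points of each patch coincide with the first and last sample points, so $M_{0, 0} = P_{0, 0}$ and $M_{N, D} = P_{N, D}$. The loop condition for $\alpha \in \mathcal{B}_{N, D}^c$ forces $M_{0, 0} = M_{N, D}$ as a single point $M \in E$, and since $\nabla F \left( \alpha \right)$ depends only on that point, $\nabla F \left( \alpha \right) \left( M_{0, 0} \right) = \nabla F \left( \alpha \right) \left( M_{N, D} \right)$. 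Tracing these two equal entries through the block-diagonal, endpoint-preserving map $B_{N, D}^{-1}$ yields equal deformations of $P_{0, 0}$ and $P_{N, D}$, so $W_F \left( \alpha \right) \in T_{\tmmathbf{P}} H_{N, D}$.

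Finally I would push the vector field forward. As $\Psi_{N, D}^c$ is a linear isomorphism $H_{N, D} \longrightarrow \mathcal{B}_{N, D}^c$, its tangent map $T_{\tmmathbf{P}} \Psi_{N, D}^c$ is, under the canonical identification of a vector space with its tangent spaces, the isomorphism $\Psi_{N, D}^c$ itself, mapping $T_{\tmmathbf{P}} H_{N, D}$ isomorphically onto $T_{\alpha} \mathcal{B}_{N, D}^c$. Setting $V_F \left( \alpha \right) \assign T_{\tmmathbf{P}} \Psi_{N, D}^c \left( W_F \left( \tmmathbf{P} \right) \right)$ then defines a vector field on $\mathcal{B}_{N, D}^c$ that corresponds to $W_F = B_{N, D}^{-1} \circ \mathcal{T}_{N, F}$ through $T \Psi_{N, D}^c$, as required; its smoothness follows from that of $\nabla F$ and of the evaluation map $\mathcal{S}_N$ together with the linearity of $B_{N, D}^{-1}$ and $\Psi_{N, D}^c$.
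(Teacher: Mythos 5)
Your argument is correct, but note that the paper itself states this theorem without any proof, so there is no official argument to compare against; your write-up supplies the missing justification. The two routine points (that $B_{N,D}^{-1}$, being linear, is its own differential and so transports $\mathcal{T}_{N,F}(\alpha)$ to a tangent vector based at the control polygon $\tmmathbf{P}$, and that $T\Psi_{N,D}^c$ is again $\Psi_{N,D}^c$ itself) are handled exactly as the paper's earlier propositions on $T\mathcal{B}_{N,D}$ would suggest. The one genuinely substantive step --- and the one a careless reading would skip --- is your verification that the lifted vector actually lands in $T_{\tmmathbf{P}}H_{N,D}$ rather than merely in $T_{\tmmathbf{P}}\left(E^{D+1}\right)^{N+1}$: since the first and last rows of $B_{1,D}$ are $e_1^t$ and $e_{D+1}^t$ (endpoint interpolation of the Bernstein basis), the same holds for $B_{1,D}^{-1}$, so the closure condition $M_{0,0}=M_{N,D}$ together with the fact that $\nabla F(\alpha)$ depends only on the point forces the $P_{0,0}$- and $P_{N,D}$-components of $W_F(\alpha)$ to coincide. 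That is precisely the content that makes the restriction to $H_{N,D}$ legitimate, and it is the part the paper leaves entirely implicit. One small caveat: your closing sentence on smoothness of $V_F$ quietly assumes regularity of $\alpha\mapsto\nabla F(\alpha)(M)$, which neither you nor the paper formalizes; it would be cleaner to state it as a hypothesis.
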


This theorem allows to interpret gradient descent method for shape
optimization as a algorithm for integrating a vector field in a finite
dimensional space. From this point of view, gradient descent method correspond
to the most naive method to integrate this vector field, namely the Euler
method. Clearly, this approach suggests to use better algorithm for vector
field integration.

\subsection{Geometry of the vector field and local extrema of shape cost
functional}

\begin{proposition}
  Let $\alpha \in \mathcal{C}^0_J \left( \left[ 0, 1 \right], E \right)$ be
  such that $\nabla F \left( \alpha \right) =\tmmathbf{0}$, i.e. $\nabla F
  \left( \alpha \right) \left( M \right) =\tmmathbf{0}$ for all $M \in \alpha
  \left( \left[ 0, 1 \right] \right)$ and let $\gamma \in \mathcal{B}_{N, D}$
  such that $\gamma \left( \frac{i}{N D} \right) = \alpha \left( \frac{i}{N D}
  \right)$ for $i \in \left\{ 0, \ldots, N D \right\}$, i.e. $\gamma =
  \Psi_{N, D} \left( B_{N, D}^{- 1} \circ \mathcal{S}_N \left( \alpha \right)
  \right)$, then $\nabla F \left( \gamma \right) \left( \gamma \left(
  \frac{i}{N D} \right) \right) = 0$ for $i \in \left\{ 0, \ldots, N D
  \right\}$ and then $V_F \left( B_{N, D}^{- 1} \circ \mathcal{S}_N \left(
  \alpha \right) \right) =\tmmathbf{0}$. It is to say that a local extremum of
  $F$ induces a local extremum of its restriction to $\mathcal{B}_{N, D}$ and
  that this extremum is ``lifted'' on a singularity of the vector field $V_F$
  on $H_{N, D}$.
\end{proposition}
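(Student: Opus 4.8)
The plan is to reduce the statement to the defining formula for $V_F$ and to exploit the interpolation property that pins $\gamma$ to $\alpha$ at the sample nodes. First I would record that, by the very construction $\gamma = \Psi_{N,D}(B_{N,D}^{-1} \circ \mathcal{S}_N(\alpha))$ together with Lemma \ref{interpol} and Proposition \ref{patchinterpol}, the curve $\gamma$ is the unique piecewise B\'ezier curve interpolating $\alpha$ at the regular nodes, so that $\gamma(i/(ND)) = \alpha(i/(ND))$ for every $i \in \{0, \ldots, ND\}$. In particular $\mathcal{S}_N(\gamma) = \mathcal{S}_N(\alpha)$, and the sample points $M_{i,j}$ entering the definition of $\mathcal{T}_{N,F}$ are exactly the points $\alpha(i/(ND))$, all of which lie on the image of $\alpha$.

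Next I would address the first assertion, $\nabla F(\gamma)(\gamma(i/(ND))) = 0$. Since $\nabla F(\alpha)(M) = \mathbf{0}$ for every $M \in \alpha([0,1])$, the hypothesis already forces the shape gradient to vanish at each node viewed as a point of $\alpha$. The point to be checked is that the same nodes, now viewed as points of the interpolant $\gamma$, are still zeros of the shape gradient $\nabla F(\gamma)$. When $\alpha \in \mathcal{B}_{N,D}^c$ the retraction identity $\Psi_{N,D} \circ \chi_{N,D} = \mathrm{Id}$ gives $\gamma = \alpha$ outright, and the assertion is immediate. In the general case I would argue that, because $\gamma$ and $\alpha$ agree at each node, the shape-gradient data sampled at those nodes coincide, so that $\nabla F(\gamma)$ and $\nabla F(\alpha)$ take the same value at each $\gamma(i/(ND)) = \alpha(i/(ND))$; the vanishing of the latter then transfers to the former.

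Granting the first assertion, the second is a short linear-algebra step. By the definition of $\mathcal{T}_{N,F}$, the vanishing of $\nabla F(\gamma)$ at every sample point means $\mathcal{T}_{N,F}(\gamma) = \mathbf{0}$ as an element of $T_{\mathcal{S}_N(\alpha)}((E^D)^N)$. Applying the linear isomorphism $B_{N,D}^{-1} = B_{1,D}^{-1} \times \cdots \times B_{1,D}^{-1}$ furnished by Proposition \ref{samplemain} and transporting through $T\Psi_{N,D}^c$ as in the theorem defining $V_F$, I obtain $V_F(B_{N,D}^{-1} \circ \mathcal{S}_N(\alpha)) = B_{N,D}^{-1}(\mathbf{0}) = \mathbf{0}$, which is exactly the claimed singularity of the vector field on $H_{N,D}$.

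The main obstacle is the transfer step in the second paragraph: a shape gradient $\nabla F(\cdot)$ depends on the whole curve, not merely on its trace through a given point, so equating $\nabla F(\gamma)$ and $\nabla F(\alpha)$ at the shared nodes is not automatic for an arbitrary Jordan curve $\alpha$. I expect the clean resolution to be the hypothesis that $\alpha$ is itself a piecewise B\'ezier curve, so that $\gamma = \alpha$, or else a regularity assumption on $\nabla F$ guaranteeing that its nodal values are controlled by the interpolation data; in either case it is the \emph{nodal} vanishing of the shape gradient, rather than its vanishing along the whole curve, that feeds into $\mathcal{T}_{N,F}$ and hence into $V_F$.
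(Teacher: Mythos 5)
Your argument follows essentially the same route as the paper's: reduce the claim to the vanishing of the shape gradient at the $(D+1)(N+1)$ sample nodes, and then push the zero sample vector through the linear isomorphism $B_{N,D}^{-1}$ to get a singularity of $V_F$ (the paper phrases this last step as ``the deformation curve is a degree-$D$ patchwise polynomial vanishing at $D+1$ points per patch, hence the zero polynomial, hence has zero control polygon,'' which is the same fact as your $B_{N,D}^{-1}(\tmmathbf{0})=\tmmathbf{0}$). The obstacle you flag in your last paragraph --- that $\nabla F(\gamma)$ is a different object from $\nabla F(\alpha)$, so the hypothesis does not by itself force the gradient of the \emph{interpolant} to vanish at the shared nodes --- is real, but it is equally present in the paper's own proof, which simply asserts that the deformation curve of $\gamma$ vanishes at the sample points; your remark that one needs either $\gamma=\alpha$ or a pointwise/locality assumption on the shape gradient makes explicit what the paper leaves implicit.
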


\begin{proof}
  The deformation curve of $\gamma$ induces by the gradient of $F$ vanishes at
  at least $\left( D + 1 \right) \left( N + 1 \right)$ points, but it is a
  ``B\'ezier curve'' of degree $D$, so it a zero polynomial. So, its control
  polygon is reduce to the origin and then $B_{N, D}^{- 1} \circ \mathcal{S}_N
  \left( \alpha \right)$ is a singularity of $V_F$. 
\end{proof}

In fact, the vector field $V_F$ is associated to the gradient of the function
$F \circ \Psi_{N, D}$. This is an heavy constrain on the vector field. For
instance, it is easy easy to see that $\tmmathbf{P}$ is an attractive
singularity of $V_F$ if and only if $\Psi_{N, D} \left( \tmmathbf{P} \right)$
is a local minimum of $F \left|_{H_{N, D}} \right.$.

\subsection{Application to a problem of images segmentation}

In this section, we sketch an application to a problem of images
segmentation. It is a problem of omnidirectional vision. Previous methods
tried with some success but does not allowed a full real time treatment. There
all based on snake-like algorithms (see {\cite{KWT88}}). The gradient use to
detect edges is a classical one based on a Canny filter and \ is combined with
a balloon force. The best previously known method is such that propagation of
the contour were done using the fast marching algorithm for level set method.
This a typical formulation of image segmentation as a shape optimization
problem. In {\cite{LMR13}}, we use piecewize B\'ezier curves to contour
propagation and achieve a very fast segmentation algorithm allowing real time
treatment even with sequential algorithm (no use of parallelism or special
hardware architecture) on a embedded system.\\

\begin{tabular}{cc}
%\begin{figure}[htbp]
\includegraphics[width=4cm]{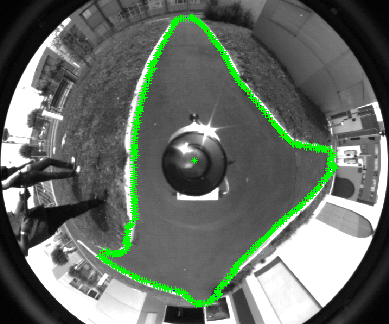} 
%\end{figure} 
& \includegraphics[width=4cm]{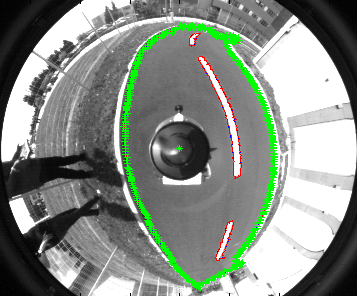}  \\
\end{tabular} \\

It is very interesting to see that, with few algorithmic modification, it is
also possible to treat change of topology, i.e. curves with several connected
components as it is shown in the following figure.

\end{document}